\newtheorem{thm}{Theorem}[section]
\newtheorem{lem}[thm]{Lemma}
\newtheorem{prop}[thm]{Proposition}
\newtheorem{cor}[thm]{Corollary}
\theoremstyle{definition}
\newtheorem{defn}[thm]{Definition}
\theoremstyle{remark}
\newcommand{\h}{\mathcal{H}}
\newcommand{\X}{\mathrm{X}}
\newcommand{\s}{\quad }
\title{ Adjoint of Pair Frames}
\author{Abolhassan Fereydooni}
\address{$^1$Department of Mathematics, Vali-e-Asr University of Rafsanjan, Rafsanjan, Iran, e-mail: {\tt fereydooniman@yahoo.com }}
\author{Ahmad Safapour}
\address{$^2$Department of Mathematics, Vali-e-Asr University of Rafsanjan, Rafsanjan, Iran,   e-mail: {\tt safapour@vru.ac.ir }}
\author{Asghar Rahimi}
\address{$^3$Department of Mathematics, University of Maragheh, Maragheh, Iran,  e-mail: {\tt asgarrahimi@yahoo.com  }}
\begin{document}
\pagestyle{headings}
\maketitle

\begin{abstract}
\mm \mm
{\it \quad\qu  The concept of $(p,q)$-pair frames is generalized to $(\ell,\ell^*)$-pair frames. Adjoint (conjugate) of a pair frames for dual space of a Banach space is introduced and some conditions for the existence of adjoint (conjugate) of pair frames are presented.}
\end{abstract}
\begin{Keywords}
frame, Bessel sequence, (Banach) pair frame, (Banach) pair Bessel, $(p,q)$-pair frame (Bessel), $(\ell,\ell^*)$-pair frame (Bessel),  adjoint (conjugate) of a pair frame.
\end{Keywords}
\begin{MSC2000}
42C\,15.
\end{MSC2000}

\section{ \textbf{Introduction} }

Frames were introduced by Duffin and Schaeffer \cite{1951 NonHor Duff}, in studying nonharmonic Fourier series. After some decades, Young  reintroduced frames in abstract Hilbert spaces  \cite{1980 NonHor Young}. Daubechies, Grossmann and Meyer studied frames deeply in 80's \cite{1986 Painless Non-Orthogonal Expansions- Daubechies A. Grossmann}. Feichtinger and Gröchenig \cite{1988 A unified approach to atomic decomp- Feichinger K. Grochenig, 1991 Atomic decompositions- Grochenig} extended the concept of frames from Hilbert spaces to Banach spaces and defined atomic decomposition and Banach frames.

The interested readers can refer to ~\cite{2012 Banach Pair Frames- A. Fereydooni A. Safapour} to study a memoir about \emph{frames from nonharmonic Fourier series to Banach pair frames}.

\emph{Pair frames} were introduced by the authors in Hilbert spaces \cite{2011 Pair Frames- A. Fereydooni A. Safapour}. They also considered pair frames in Banach spaces and defined Banach pair frames \cite{2012 Banach Pair Frames- A. Fereydooni A. Safapour}. It is shown that this notion generalizes some various types of frames. Some characterizations of Banach pair frames are presented in \cite{2012 Characterizations  for Banach Pair Frames- A. Fereydooni A. Safapour}.

The present paper is organized as follows. In section 2, some notations and required definitions are recalled. The concept of  frames and some types of frames in Banach spaces are considered in Section 3. It is proved that if we have two $\ell$-Bessel and $ \ell^*$-Bessel for a Banach space and its dual respectively, they are pairable.


If we have a (Banach) pair frame for a Banach space, a natural question can  arise:  Can one construct a (Banach) pair frame for the dual space using  this (Banach) pair frame?  Section 4 is devoted to address this question. Considering this subject, instead of the dual of pair frames, the concept of adjoint of pair frames arises.

Almost all propositions and theorems are stated in two cases, unconditional or nonunconditional cases. In each proposition and theorem the  unconditional case is  put in  "( )".
\section{ \textbf{Preliminaries} }

Through this paper, $\mathrm{X}$ ( $\mathcal{H}$ ) will denote a reflexive  Banach space (Hilbert space, rep.). $\langle.,.\rangle$ is used for the inner product of Hilbert spaces or the action  the functionals $\mathrm{X}^*$ on $\mathrm{X}$. "$\mathbb{I}$"  denotes  the index set of the natural numbers. The notation $\sigma$ is standed for permutations of $\mathbb{I}$.  All norms are denoted by $\|.\|;$ the reader can recognize conveniently that  to which concept each norm refers.

Let $L$ be a bounded linear operator, $\mathcal{D}(L)$ and  $ \mathcal{R}(L)$, denote the domain and the range of $L$, respectively. If $0<p<\infty$, and $\frac{1}{p}+\frac{1}{q}=1$, $q$ is called the exponential conjugate of $p$. \\
For $F=\{f_i\}\subset \mathrm{X}$ and $G=\{g_i\}\subset \mathrm{X^*}$, the operator
\[ U (U_{G}):\mathrm{X}\rightarrow {\mathbb{C}}^{\mathbb{I}}, \quad U(f):=\{\langle f , g_i \rangle\}, \]
is  called the \textbf{analysis operator}, and the operator
\[T (T_{F}):\mathcal{D}(T) ( \subset {\mathbb{C}}^{\mathbb{I}} ) \rightarrow \mathrm{X} , \quad  T(\{c_{i}\}):= \sum c_i f_i, \]
is said to be the  \textbf{synthesis operator}.


Let $\ell$ be a \textbf{Banach scalar sequence space}; a normed  vector space of scalar sequences which is a Banach space with respect to its norm.
If $\ell$ is a scalar sequence space, for every permutation $\sigma$ of $\mathbb{I}$, put
\[ \sigma F = \{f_{\sigma(i)} \}, \quad \sigma G = \{g_{\sigma(i)} \}, \quad \sigma \ell=\{ \{c_{\sigma(i)}\} \mid \{c_i\} \in \ell  \}.\]

  \begin{defn}\label{d:Unconditional scalar Sequence}
 We say that a Banach scalar sequence space  $\ell$ is an \textbf{unconditional Banach scalar sequence space} if for every permutation $\sigma$ of $\mathbb{I}$ and every  $\{c_i\} \in \ell$,
 \[\{c_{\sigma(i)}\} \in \ell, \s \|\{c_{\sigma(i)}\}\|=\| \{c_{i}\} \|.\]
 \end{defn}
A Banach scalar sequence space $\ell$ is called a \textbf{BK-space} if the coordinate functionals are continuous. Put ${\delta}_i=\{\delta_{ij}\}_{j} $ for $ i \in \mathbb{I}$, where $\delta_{ij} $ is the Kroneker delta for $ i,j \in \mathbb{I}$. $\{{\delta}_i\}$ is called the set of the \textbf{canonical vectors}. Additionally, when $\{{\delta}_i\}$ constitute a basis for $\ell$, $\ell$ is said to be a \textbf{Schauder sequence space }(CB-space or model space). Furthermore, when $\ell$ is reflexive, it is called an \textbf{RCB-space}.


For the proof of the next Lemma we refer to ~\cite[p.~201]{1964 Functional Analysis in Norm- Kantorovich}.


\begin{lem}\label{l: l Schauder sequence space then l  Schauder sequence sp}
Let $\ell$ be a Schauder sequence space. $\ell^*$, the dual of $\ell$, is isometrically isomorphic to a BK-space
\[ \ell^ \circledast =\{\{\phi ({\delta}_i )\} \mid \phi \in \ell^*\}.\]
 Also, for every linear functional $\phi \in \ell^*  $ there is a unique $\{d_i\}\in \ell^ \circledast$, so that $\phi$ has the form
\[ \phi (\{c_i\})=\sum d_i c_i, \quad \forall \{c_i\} \in \ell.\]
 The sequence $\{d_i\}$ is uniquely determined by $d_i=\phi({\delta}_i) $ for all $i \in \mathbb{I}$. Moreover if $\ell$ is reflexive, than $\ell^*$ is a Schauder sequence space.
\end{lem}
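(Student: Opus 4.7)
The plan is to build an explicit linear bijection $\Phi:\ell^{*}\to\ell^{\circledast}$ by $\Phi(\phi)=\{\phi(\delta_{i})\}$, transport the operator norm through it, and then deduce every claim in the lemma from properties of $\Phi$ together with the Schauder basis hypothesis on $\ell$.

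First I would fix $\phi\in\ell^{*}$ and use that $\{\delta_{i}\}$ is a Schauder basis to expand any $\{c_{i}\}\in\ell$ as the norm-convergent sum $\sum_{i}c_{i}\delta_{i}$. Continuity of $\phi$ then yields
\[
\phi(\{c_{i}\})=\sum_{i}c_{i}\phi(\delta_{i})=\sum_{i}c_{i}d_{i},\qquad d_{i}:=\phi(\delta_{i}),
\]
which is exactly the representation asserted in the statement. Uniqueness of the representing sequence is immediate upon evaluating at $c=\delta_{j}$, and the same observation shows that $\Phi$ is injective; surjectivity onto $\ell^{\circledast}$ is built into its definition. Equipping $\ell^{\circledast}$ with the transported norm $\|\Phi(\phi)\|_{\circledast}:=\|\phi\|_{\ell^{*}}$ makes $\Phi$ an isometric isomorphism by fiat, so it only remains to check that $\ell^{\circledast}$ under this norm is a BK-space; but the $j$-th coordinate functional $\{d_{i}\}\mapsto d_{j}$ pulls back via $\Phi^{-1}$ to the map $\phi\mapsto\phi(\delta_{j})$, which is bounded by $\|\delta_{j}\|_{\ell}$.

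For the final assertion assume $\ell$ is reflexive, and let $\delta_{j}^{*}\in\ell^{*}$ be the biorthogonal functional to $\delta_{j}$, so that $\Phi(\delta_{j}^{*})$ is the $j$-th canonical vector in $\ell^{\circledast}$. The goal is to show that $\{\delta_{j}^{*}\}$ is a Schauder basis for $\ell^{*}$, equivalently that the basis $\{\delta_{i}\}$ of $\ell$ is shrinking in the sense that every $\phi\in\ell^{*}$ is the norm limit of its partial sums $\sum_{j\le n}\phi(\delta_{j})\,\delta_{j}^{*}$. I would invoke the classical half of James' characterisation of reflexivity: in a reflexive Banach space, every Schauder basis is shrinking. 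Combined with the representation and the isometry $\Phi$ already obtained, this yields that each $\{d_{i}\}\in\ell^{\circledast}$ is the norm limit of $\sum_{j\le n}d_{j}\delta_{j}$, so $\ell^{\circledast}$—and hence $\ell^{*}$—is a Schauder sequence space.

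The principal obstacle is the reflexive-implies-shrinking step, which is not a routine computation but a genuine Banach-space-theory input; in the paper it is presumably absorbed into the citation to Kantorovich--Akilov. Everything else amounts to unpacking the basis expansion against the continuity of $\phi$ and transporting the resulting structure through $\Phi$.
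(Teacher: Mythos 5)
Your argument is correct. The paper itself offers no proof of this lemma, deferring entirely to Kantorovich--Akilov (p.~201), and what you have written is the standard argument that the citation stands in for: expand $\{c_i\}=\sum_i c_i\delta_i$ using the basis hypothesis, apply continuity of $\phi$ to get the representation and uniqueness, transport the norm to make $\Phi$ isometric, and check the coordinate functionals of $\ell^{\circledast}$ are bounded via $|\phi(\delta_j)|\leqslant\|\phi\|\,\|\delta_j\|$. You have also correctly isolated the one nontrivial input --- that a Schauder basis of a reflexive space is shrinking (James), which is exactly what makes the biorthogonal functionals $\{\delta_j^*\}$ a basis of $\ell^*$ and hence the canonical vectors a basis of $\ell^{\circledast}$ --- so nothing is missing.
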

 $\ell^*$ and $\ell^ \circledast$ are identified in this paper.

\section{ \textbf{Frames in Hilbert and Banach Spaces} }

Here we restate some definitions and results from \cite{2012 Banach Pair Frames- A. Fereydooni A. Safapour}. The reader can refer there for considering the proofs.
 \begin{defn}\label{d:frame}
A family $F=\{f_i\}\subset \mathcal{H}$ is called a \textbf{frame} for $\mathcal{H}$ if there exist $A,B>0$ such that for every $f\in \mathcal{H}$,
\begin{equation}\label{e:frame}
A\| f \|^2 \leqslant \sum |  \langle f,f_i \rangle |^2 \leqslant B \| f \|^2 .
\end{equation}
If the right inequality is satisfied for some $B>0$,  $F=\{f_i\}$ is called a \textbf{Bessel sequence} for $\mathcal{H}$.
 \end{defn}
The following proposition is proved in \cite{2011 Pair Frames- A. Fereydooni A. Safapour}.
\begin{prop}\label{p: Bessels-pair Bessels}
A family $F=\{f_i\}\subset \mathcal{H}$ is a Bessel sequence for $ \mathcal{H}$ if and only if the operator
\begin{equation}\label{e:Bessel Operator}
S:\h \rightarrow \h, \s S(f )=\sum \langle f, f_i \rangle f_i,
\end{equation}
is a well defined operator. In this situation, $S$ is bounded.
 \end{prop}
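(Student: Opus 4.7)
The plan is to split the equivalence into its two directions and to treat the operator $S$ as a composition of the analysis and synthesis operators $U$ and $T$ associated with $F$. Throughout, convergence of $\sum \langle f,f_i\rangle f_i$ means convergence of the partial sums in $\h$; because $\ell^2$ is unconditional, the conditional and unconditional versions of the statement will follow simultaneously.

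For the forward direction, suppose $F$ is Bessel with upper bound $B$. The Bessel inequality says exactly that the analysis operator $U:\h\to\ell^2$, $Uf=\{\langle f,f_i\rangle\}$, is well defined and satisfies $\|U\|\leq\sqrt{B}$. A direct computation using Parseval on finite sums shows that the synthesis operator $T:\ell^2\to\h$, $T(\{c_i\})=\sum c_if_i$, is the Hilbert-space adjoint of $U$; in particular the series defining $T$ converges for every $\{c_i\}\in\ell^2$, and $\|T\|=\|U\|\leq\sqrt{B}$. Taking $\{c_i\}=\{\langle f,f_i\rangle\}\in\ell^2$ gives that $S=T\circ U$ is defined on all of $\h$ and bounded with $\|S\|\leq B$.

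For the reverse direction, assume $Sf=\sum\langle f,f_i\rangle f_i$ is defined (convergent in $\h$) for every $f\in\h$. First I would observe that $\langle S_Nf,f\rangle=\sum_{i=1}^N|\langle f,f_i\rangle|^2$ for the partial sums $S_Nf$, so passing to the limit yields $\sum|\langle f,f_i\rangle|^2=\langle Sf,f\rangle<\infty$ for each $f$. Consequently $U:\h\to\ell^2$ is a well-defined linear map. To upgrade pointwise finiteness to a uniform bound I would invoke the closed graph theorem: if $g_n\to g$ in $\h$ and $Ug_n\to\{c_i\}$ in $\ell^2$, then $\langle g_n,f_i\rangle\to\langle g,f_i\rangle$ by continuity of the inner product, while coordinatewise convergence in $\ell^2$ forces $\langle g_n,f_i\rangle\to c_i$; hence $c_i=\langle g,f_i\rangle$ and $U$ is closed. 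The closed graph theorem then gives $\|U\|<\infty$, so $B:=\|U\|^2$ serves as a Bessel bound and $F$ is Bessel.

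The only delicate point is the reverse direction, specifically the step from ``$\sum|\langle f,f_i\rangle|^2<\infty$ for each $f$'' to a uniform Bessel bound. Using the closed graph theorem avoids any measurement of the partial sums $S_N$ directly; alternatively one could apply the Banach--Steinhaus theorem to the sequence of bounded operators $S_N$, but the closed-graph route is cleaner because the target space $\ell^2$ makes the required coordinate computation trivial. With this, the boundedness of $S$ asserted in the final sentence is automatic from the estimate $\|Sf\|\leq\|T\|\,\|Uf\|\leq B\|f\|$ established in the forward direction.
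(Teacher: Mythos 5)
Your argument is correct, and it is worth pointing out that the paper itself offers no proof to compare against: Proposition \ref{p: Bessels-pair Bessels} is simply quoted from the authors' preprint on pair frames. Your route is the standard self-contained one. The forward direction factors $S=TU$ through $\ell^2$ with $T=U^{*}$, giving $\|S\|\leqslant B$; the reverse direction reads off $\sum|\langle f,f_i\rangle|^{2}=\langle Sf,f\rangle<\infty$ from the partial sums $S_N$ and upgrades this pointwise statement to a uniform bound via the closed graph theorem (Banach--Steinhaus applied to the finite-rank operators $S_N$ works equally well, as you note, and has the side benefit of yielding boundedness of $S$ without returning to the forward direction). The one step I would tighten is the forward direction's phrase that $T$ ``is the adjoint of $U$; in particular the series defining $T$ converges'': as written this is circular, since $T$ must be known to be defined before it can be identified with $U^{*}$. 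The clean fix is either to note that $U^{*}\delta_i=f_i$ for the canonical basis $\{\delta_i\}$ of $\ell^2$, so that continuity of $U^{*}$ applied to the convergent expansion $\sum c_i\delta_i$ gives convergence of $\sum c_i f_i$ to $U^{*}\{c_i\}$, or to estimate the tails directly by duality and Cauchy--Schwarz, $\bigl\|\sum_{i=n}^{m}c_if_i\bigr\|\leqslant\sqrt{B}\bigl(\sum_{i=n}^{m}|c_i|^{2}\bigr)^{1/2}$, which also delivers the unconditional convergence you mention.
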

\begin{thm}\label{t:frame}
(\cite{2012 Banach Pair Frames- A. Fereydooni A. Safapour})
A family $F=\{f_i\}\subset \mathcal{H}$ is a frame for $\mathcal{H}$ if and only if the operator $S$ defined in (\ref{e:Bessel Operator}) is  well defined and invertible.
\end{thm}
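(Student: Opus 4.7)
The plan is to prove the two implications separately, invoking Proposition~\ref{p: Bessels-pair Bessels} in both directions to convert the Bessel/series condition into a statement about the operator $S$.

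For the forward direction, assume $F=\{f_i\}$ is a frame. The upper frame inequality says $F$ is Bessel, so by Proposition~\ref{p: Bessels-pair Bessels} the operator $S$ is well defined and bounded. First I would record two structural facts: $S$ is self-adjoint (a direct computation $\langle Sf,g\rangle=\sum\langle f,f_i\rangle\langle f_i,g\rangle=\overline{\langle Sg,f\rangle}$) and $\langle Sf,f\rangle=\sum|\langle f,f_i\rangle|^2$ for all $f\in\h$. Substituting this identity into the frame inequalities (\ref{e:frame}) yields $A\|f\|^2\le\langle Sf,f\rangle\le B\|f\|^2$. From the lower bound and Cauchy--Schwarz, $A\|f\|\le\|Sf\|$, so $S$ is bounded below; consequently $S$ is injective with closed range. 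Self-adjointness then gives $\overline{\mathcal{R}(S)}=\ker(S)^{\perp}=\{0\}^{\perp}=\h$, so $S$ is surjective, and the Open Mapping Theorem makes $S^{-1}$ bounded.

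For the converse, suppose $S$ is well defined and invertible. Proposition~\ref{p: Bessels-pair Bessels} already supplies boundedness, whence $F$ is Bessel and the upper frame bound is immediate from $\sum|\langle f,f_i\rangle|^2=\langle Sf,f\rangle\le\|S\|\,\|f\|^2$. The real work is the lower frame bound. Here I would exploit that $S$ is both self-adjoint and positive (as shown above), so its positive square root $S^{1/2}$ exists, is self-adjoint, and is invertible with $(S^{1/2})^{-1}=S^{-1/2}$ satisfying $\|S^{-1/2}\|^2=\|S^{-1}\|$. Writing $\|f\|=\|S^{-1/2}S^{1/2}f\|\le\|S^{-1/2}\|\,\|S^{1/2}f\|$ and squaring gives
\[
\sum|\langle f,f_i\rangle|^2=\langle Sf,f\rangle=\|S^{1/2}f\|^2\ge \frac{1}{\|S^{-1}\|}\,\|f\|^2,
\]
so $A=\|S^{-1}\|^{-1}$ serves as a lower frame bound.

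The only delicate step is producing the lower frame bound from mere invertibility of $S$; the naive estimate $\|Sf\|\ge\|S^{-1}\|^{-1}\|f\|$ only controls $\|Sf\|$, not $\langle Sf,f\rangle$, and Cauchy--Schwarz alone goes the wrong way. The functional calculus (equivalently, the spectral bound $S\ge \|S^{-1}\|^{-1} I$ coming from $\sigma(S)\subset[\|S^{-1}\|^{-1},\|S\|]$ for a positive invertible self-adjoint operator) is what bridges this gap; if one prefers to avoid $S^{1/2}$, the same conclusion follows from the variational characterization $\|S^{-1}\|=\sup_{g\ne 0}\langle S^{-1}g,g\rangle/\|g\|^2$ applied to $g=Sf$, which yields $\langle f,Sf\rangle\ge\|S^{-1}\|^{-1}\|f\|^2$ directly.
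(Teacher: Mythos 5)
The paper does not actually prove this theorem: it is quoted from the preprint \cite{2012 Banach Pair Frames- A. Fereydooni A. Safapour} with the remark that the reader should consult that reference for proofs, so there is no in-paper argument to compare yours against. On its own merits your proof is correct and is the standard frame-operator argument. The forward direction (Bessel gives a bounded $S$ via Proposition~\ref{p: Bessels-pair Bessels}; the identity $\langle Sf,f\rangle=\sum|\langle f,f_i\rangle|^2$ plus Cauchy--Schwarz gives $A\|f\|\le\|Sf\|$; self-adjointness turns injectivity into dense range, and closed range finishes surjectivity) is complete. The converse is also handled correctly, and you rightly flag that the only nontrivial point is extracting the \emph{lower} frame bound from invertibility; your route through the positive square root, with $\|S^{-1/2}\|^2=\|S^{-1}\|$, is sound. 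The one inaccuracy is the closing parenthetical: substituting $g=Sf$ into $\langle S^{-1}g,g\rangle\le\|S^{-1}\|\|g\|^2$ yields $\langle Sf,f\rangle\le\|S^{-1}\|\,\|Sf\|^2$, an upper bound in terms of $\|Sf\|^2$, not the claimed lower bound $\langle Sf,f\rangle\ge\|S^{-1}\|^{-1}\|f\|^2$; to avoid functional calculus you would instead use the Cauchy--Schwarz inequality for the positive form $[u,v]=\langle Su,v\rangle$ with $u=f$, $v=S^{-1}f$, giving $\|f\|^4\le\langle Sf,f\rangle\langle S^{-1}f,f\rangle\le\langle Sf,f\rangle\,\|S^{-1}\|\,\|f\|^2$. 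Since that remark is offered only as an optional alternative, the main proof stands.
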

 As a standard reference about frame theory, \cite{2003 An introduction to frames- Christensen} can be suggested.

 \begin{defn}\label{d:pair frame}
Let $F=\{f_i\}\subset \mathrm{X}$ and $G=\{g_i\}\subset \mathrm{X^*}$. The pair  $(G,F)$  is said to be an (unconditional) \textbf{pair Bessel }if the operator
\[ S(S_{FG}): \X \rightarrow \X, \s S(f)=   \sum \langle f, g_i\rangle f_i ,\]
is well defined (unconditionally); i.e. the series converges (unconditionally) for every $f\in \mathrm{X}$.
The (unconditional) pair Bessel $(G,F)$ is called an (unconditional)\textbf{ pair frame} when $S$ is invertible.
 \end{defn}
Let $F=\{f_i\}\subset \mathrm{X}$ and $G=\{g_i\}\subset \mathrm{X^*}$. By the term  \emph{"$F=\{f_i\}$ and $G=\{g_i\}$ are \textbf{pairable} for $\mathrm{X}$"}, we mean that $(G,F)$ is a pair Bessel for $\mathrm{X}$.

Next proposition is proved in \cite{2012 Banach Pair Frames- A. Fereydooni A. Safapour}. But it  can also be concluded from Proposition \ref{p: Bessels-pair Bessels} and Theorem \ref{t:frame}. Proposition \ref{p:Hilbert frame = pair frame} shows that the pair frames (Bessels) are generalizations of frames (Bessel sequences) in Hilbert spaces.

\begin{prop}\label{p:Hilbert frame = pair frame}
$F=\{f_i\}\subset \mathcal{H}$ is a  frame (Bessel sequence) for $\mathcal{H}$ if and only if $ ( F,F )$ is a pair frame (Bessel) for $\mathcal{H}$. In this case $ (F,F )$ is an unconditional pair frame (Bessel).
 \end{prop}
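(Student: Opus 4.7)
The plan is to observe that both claimed equivalences are essentially formal unpackings of Definition~\ref{d:pair frame} combined with the two preceding results, Proposition~\ref{p: Bessels-pair Bessels} and Theorem~\ref{t:frame}; only the ``moreover'' clause about unconditionality carries any nontrivial content, and it reduces to the permutation-invariance of the Bessel estimate.

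First I would specialize Definition~\ref{d:pair frame} to the case $G=F$, identifying $\mathcal{H}$ with $\mathcal{H}^*$ via the Riesz map (which the paper implicitly does by using $\langle\cdot,\cdot\rangle$ both for the inner product and for the dual pairing). The associated pair operator is then
\[
S(f)=\sum\langle f,f_i\rangle f_i,
\]
which is literally the operator in (\ref{e:Bessel Operator}). Hence $(F,F)$ is a pair Bessel for $\mathcal{H}$ iff $S$ is well defined on $\mathcal{H}$, and by Proposition~\ref{p: Bessels-pair Bessels} this is equivalent to $F$ being a Bessel sequence. Adding the requirement that $S$ be invertible on both sides and invoking Theorem~\ref{t:frame} upgrades the equivalence from Bessel to frame, giving the first sentence.

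For the final clause, suppose $F$ is Bessel with bound $B$. For every permutation $\sigma$ of $\mathbb{I}$ and every $f\in\mathcal{H}$, rearranging a nonnegative series does not change its value, so
\[
\sum|\langle f,f_{\sigma(i)}\rangle|^{2}=\sum|\langle f,f_{i}\rangle|^{2}\le B\|f\|^{2};
\]
thus $\sigma F=\{f_{\sigma(i)}\}$ is also Bessel with the same bound. Applying Proposition~\ref{p: Bessels-pair Bessels} to $\sigma F$ shows that $\sum\langle f,f_{\sigma(i)}\rangle f_{\sigma(i)}$ converges in $\mathcal{H}$ for every permutation $\sigma$ and every $f$, which is the standard characterization of unconditional convergence of $\sum\langle f,f_i\rangle f_i$ in a Banach space. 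Hence $(F,F)$ is an unconditional pair Bessel, and when $S$ is additionally invertible, an unconditional pair frame.

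I do not foresee a serious obstacle. The two points requiring a little care are the identification $\mathcal{H}\cong\mathcal{H}^*$ used to view $F$ as a subset of $\mathcal{H}^*$ in Definition~\ref{d:pair frame}, and the invocation of the classical fact that convergence after every permutation is equivalent to unconditional convergence in a Banach space; both are entirely standard.
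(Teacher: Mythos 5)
Your proposal is correct and follows essentially the same route the paper indicates: the paper gives no written proof but remarks that the statement ``can also be concluded from Proposition~\ref{p: Bessels-pair Bessels} and Theorem~\ref{t:frame}'', which is exactly your reduction via the operator $S$ in (\ref{e:Bessel Operator}). Your additional argument for the unconditionality clause (permutation-invariance of the Bessel bound plus the rearrangement characterization of unconditional convergence) correctly fills in a detail the paper leaves implicit.
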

 \begin{defn}\label{d:Schauder frame}
Let $F=\{f_i\}\subset \mathrm{X}$ and $G=\{g_i\}\subset \mathrm{X^*}$. The (unconditional) pair Bessel $(G,F)$  is said to be an (unconditional) \textbf{Schauder frame} for $\X$ if for every $f \in \mathrm{X}$,
\[f= \sum \langle f, g_i\rangle f_i,\]
(and the sum converges unconditionally).
 \end{defn}
 Every Schauder frame can be considered as  a pair frame. But the associated operator $S$, have to be the identity operator.
\begin{defn}\label{d:l-bessel}
Let $G=\{g_i\}\subset \mathrm{X^*}$ and $\ell$ be a BK-space. $G=\{g_i\}$ is called an (unconditional)\textbf{ $\ell$-Bessel} for $\mathrm{X}$ with bound $B>0$, if   for every $f\in \ X$,
\begin{enumerate}
\item $\{\langle f , g_i \rangle\} \in \ell   \quad  (\{\langle f , g_{\sigma(i)} \rangle\} \in \ell,   \forall \sigma )$,
\item $ \| \{\langle f , g_i \rangle\} \|   \leqslant  B \|f  \|  \quad (\| \{\langle f , g_{\sigma(i)} \rangle\} \|   \leqslant  B \|f  \|,  \forall \sigma).$
\end{enumerate}
Additionally, if  for every $f\in \ X$,
\[ A  \|f  \|   \leqslant  \| \{\langle f , g_i \rangle\} \|  \quad (A  \|f  \|   \leqslant \|  \{\langle f , g_{\sigma(i)} \rangle\} \| ,  \forall\sigma), \]
for some  $A>0$, $G=\{g_i\}$ is said to be an (unconditional)\textbf{ $\ell$-frame} for $\mathrm{X}$. $A$ and $B$ are called lower and upper $\ell$-frame bounds, respectively.
 \end{defn}
It is obvious that $ \|U_G\|, \|U_{\sigma G}\|   \leqslant  B$ for every permutation $\sigma$ of $\mathbb{I}$. If  $\ell$ is an unconditional Banach scalar sequence space and $G=\{g_i\}\subset \mathrm{X^*}$ is an $\ell$-Bessel for $\mathrm{X}$, then $G=\{g_i\}$ is an unconditional \textbf{ $\ell$-Bessel} for $\mathrm{X}$ and $ \|U_G\| = \|U_{\sigma G}\|   \leqslant  B$ for every permutation $\sigma$ of $\mathbb{I}$.

If conditions (1) and (2) in the  Definition \ref{d:l-bessel} are satisfied for some  $G=\{g_i\}\subset \mathrm{X^*}$ and $\ell=\ell^p$, $G=\{g_i\}$ is called a \textbf{$p$-Bessel} for $\mathrm{X}$; also if the lower inequality holds for some $A>0$, it is said to be a \textbf{$p$-frame} for $\mathrm{X}$.

After definition of $\ell$-Bessels with a BK-space $\ell$, the notion of  pair frames (Bessels) w.r.t.  $\ell$ can be defined.
 \begin{defn}\label{d:pair Bessel w.r.t. l-  l-pair Bessel}
Let $\ell$ be a BK-space and $G=\{g_i\}\subset \mathrm{X^*}$ be an (unconditional) $\ell$-Bessel for $\mathrm{X}$. If there exists   $F=\{f_i\}\subset \mathrm{X}$ such that $(G,F)$ is an (unconditional) pair Bessel for $\mathrm{X}$; i.e. the operator
  \[ S(S_{FG}):\mathrm{X}\longrightarrow  \mathrm{X}, \quad S(f):=\sum \langle f,g_i \rangle\ f_i.\]
 is well defined (unconditionally), then $(G,F)$ is called an (unconditional) \textbf{pair Bessel for $\mathrm{X}$ w.r.t. $\ell$} or an (unconditional) $\ell$-pair Bessel for  $\mathrm{X}$.

 Assume that  $( G,F)$  is an (unconditional) pair Bessel for $\mathrm{X}$ w.r.t. $\ell$. If the  operator $S$ is invertible, $( G,F)$ is called an (unconditional) \textbf{pair frame for $\mathrm{X}$ w.r.t. $\ell$} or an (unconditional) $\ell$-pair frame for  $\mathrm{X}$.

 Furthermore, if for an $\ell$-pair frame (Bessel) $(G,F)$, there is a BK-space $\ell'$ such that $F=\{f_i\}$ is an $\ell'$-Bessel, $(G,F)$  is said to be an  \textbf{$(\ell,\ell')$-pair frame (Bessel)} for $\mathrm{X}$ or a pair frame (Bessel) for $\mathrm{X}$ w.r.t. $(\ell,\ell')$.
\end{defn}

 \begin{defn}\label{d:atomic decomposition}
Let $\ell$ be a BK-space, $F=\{f_i\}\subset \mathrm{X}$ and $G=\{g_i\}\subset \mathrm{X^*}$. $(G,F)$  is called an (unconditional) \textbf{atomic decomposition for $\mathrm{X}$ w.r.t. $\ell$}, if there are $A,B>0$ such that  for every $f\in \ X$,
\begin{enumerate}
\item $\{\langle f , g_i \rangle\} \in \ell   \s (\{\langle f , g_{\sigma(i)} \rangle\} \in \ell,    \forall \sigma ),$
\item $ A  \|f  \|   \leqslant   \| \{\langle f , g_i \rangle\} \|   \leqslant  B \|f  \| \s ( A  \|f  \|   \leqslant  \|  \{\langle f , g_{\sigma(i)} \rangle\} \|   \leqslant  B \|f  \|,  \forall \sigma)$,
\item $ f=\sum \langle f , g_i \rangle\ f_i \s $  (series converges unconditionally).
\end{enumerate}
\end{defn}
Additionally, if $F=\{f_i\}$ is an $\ell'$-Bessel, which $\ell'$ is a BK-space, the above atomic decomposition is said to be an $(\ell,\ell')$-atomic decomposition.


 \begin{defn}\label{d:unconditionally operator}
Let $\ell$ be an unconditional Banach scalar sequence space. The bounded operator $T:\ell \rightarrow \mathrm{X}$ is said to be an \textbf{unconditional operator} from $\ell$  into $\mathrm{X}$ if for every permutation $\sigma$ of $\mathbb{I}$, there is a bounded operator $T_\sigma :\ell \rightarrow \mathrm{X}$  so that
\[T_{\sigma} (\{c_{\sigma(i)}\})=T(\{c_i\}).\]

 \end{defn}
The notion of pair frame is extended by generalizing   the synthesis  operator.
\begin{defn}\label{d:Banach pair frame}
Let  $G=\{g_i\}\subset \mathrm{X^*}$ and $T: \mathcal{R}(U_G) \rightarrow \mathrm{X}$ be an operator. $(G ,T)$ is called a \textbf{ Banach pair Bessel} for  $\mathrm{X}$ if the operator
\[S(S_{TG}):\mathrm{X}\longrightarrow  \mathrm{X}, \quad S(f):=T(\{\langle f ,g_i \rangle\}), \]
is bounded.
If for every permutation $\sigma$ of $\mathbb{I}$, there is an operator $T_{\sigma}: \mathcal{R}(U_{ \sigma G}) \rightarrow \mathrm{X}$  such that
\[  S_{\sigma}(f):=T_{\sigma} (\{\langle f ,g_{\sigma(i)} \rangle\})=S(f) ,\]
 $(G,T)$ is said to be an \textbf{unconditional Banach pair Bessel} for  $\mathrm{X}$.

Let  $( G,T)$ be an (unconditional) Banach pair Bessel and $S$ be the  associated  operator. If $S$ is invertible, $(G,T)$ is said to be an (unconditional)\textbf{ Banach pair frame} for  $\mathrm{X}$.
 \end{defn}

By implementing  a BK-space $\ell$, another version of the above definitions \emph{w.r.t.  $\ell$ }is defined.

\begin{defn}\label{d:Banach pair frame  w.r.t. l}
Let $\ell$ be a BK-space and $G=\{g_i\}\subset \mathrm{X^*}$ be an $\ell$-Bessel for $\mathrm{X}$. If $T:\ell \rightarrow \mathrm{X}$ is a bounded operator, $( G,T)$ is referred to as a \textbf{ Banach pair Bessel for  $\mathrm{X}$ w.r.t. $\ell$} or a Banach  $\ell$-pair Bessel for  $\mathrm{X}$.

Define
\[S(S_{GT}):\mathrm{X}\longrightarrow  \mathrm{X}, \quad S(f):=T(\{\langle f , g_i \rangle\}). \]
Additionally suppose that $G=\{g_i\}\subset \mathrm{X^*}$ is  an unconditional $\ell$-Bessel for $\mathrm{X}$. If for every permutation $\sigma$ of $\mathbb{I}$  there is a bounded operator $T_{\sigma}:\ell \rightarrow \mathrm{X}$  such that,
\[  S_{\sigma}(f):=T_{\sigma} (\{\langle f , g_{\sigma(i)} \rangle\})=S(f) ,\]
 $(G,T)$ is called an \textbf{unconditional Banach pair Bessel for  $\mathrm{X}$ w.r.t. $\ell$}.

Let  $(G,T)$ be an (unconditional) Banach pair Bessel and $S$ be its associated   operator. If $S$ is invertible, $(G,T)$ is said to be an (unconditional)\textbf{ Banach pair frame for  $\mathrm{X}$ w.r.t. $\ell$} or an (unconditional) Banach  $\ell$-pair frame for  $\mathrm{X}$.
 \end{defn}

 \begin{defn}\label{d:unconditional Banach Frames}
 Let $\ell$ be a  BK-space,  $G=\{g_i\}\subset \mathrm{X^*}$ and $T:\ell \rightarrow \mathrm{X}$ be a bounded operator. $(G,T)$ is called an \textbf{(unconditional) Banach frame for  $\mathrm{X}$ w.r.t. $\ell$}  if there are $A,B>0$ such that  for every $f\in \ X$,
\begin{enumerate}
\item $\{\langle f , g_i \rangle\} \in \ell \s (\{\langle f , g_{\sigma(i)} \rangle\} \in \ell,   \forall \sigma )$,
\item $  A \|f\|   \leqslant \| \{\langle f , g_i \rangle\} \|   \leqslant  B  \|f  \| \s ( A  \|f  \|   \leqslant  \|  \{\langle f , g_{\sigma(i)} \rangle\} \|   \leqslant  B \|f  \|, \forall \sigma)$,
\item $f=T (\{ \langle f,g_i \rangle \})  $

(for every permutation $\sigma$ of $\mathbb{I}$,  there is a bounded operator $T_{\sigma}:\ell \rightarrow \mathrm{X}$ such that
\[ f=T_{\sigma} (\{ \langle f,g_{\sigma(i)} \rangle \}).\]
\end{enumerate}
We refer to $T_{\sigma}$ as a permutation of $T$ for the permutation $\sigma$ of $\mathbb{I}$.
 \end{defn}
 The all operators $S$, in the above definition of pair frames (Bessels) is called \textbf{pair frame (Bessel) operator}.

  The authors have proved in \cite{2011 Pair Frames- A. Fereydooni A. Safapour} that for the conjugate exponentials $p$ and $q$, if $F=\{f_i\} \subset \mathrm{X}$ is $p$-Bessel and $G=\{g_i\} \subset \mathrm{X^*}$ is $q$-Bessel for $\mathrm{X^*}$ and $\mathrm{X}$, respectively, then $F=\{f_i\}$ and $G=\{g_i\}$ are pairable for $\mathrm{X^*}$ and $\mathrm{X}$. In this situation, $( F,G) $ is called a $(p,q)$-pair frame for $\mathrm{X}$. In the other words, by letting $\ell= \ell^p$ and $\ell^*= \ell^q$, if $F=\{f_i\} \subset \mathrm{X}$ is $\ell$-Bessel and $G=\{g_i\} \subset \mathrm{X^*}$ is $\ell^*$-Bessel for $\mathrm{X^*}$ and $\mathrm{X}$ respectively, then  $( F,G) $ and $( G,F)  $ are pair Bessels for $\mathrm{X^*}$ and $\mathrm{X}$, respectively.
A natural question which  arises  is that does analogous  results hold for general Banach scalar sequence spaces $\ell$ and $\ell^*$? The next theorem provides an affirmative answer to this question.
The claim is proved in \cite{2011 Pair Frames- A. Fereydooni A. Safapour} for $p$-Bessels and $q$-Bessels by using the Holder's inequality. In the proof of Theorem \ref{t: (l l star) pair Bessel } we don't use the Holder's inequality. Following the above nomination, we call such a pair frame (Bessel), \textbf{$(\ell  , \ell ^* )$-pair frame (Bessel)}.
\begin{thm}\label{t: (l l star) pair Bessel }
Let $\ell$ be a Schauder sequence space, $G=\{g_i\} \subset \mathrm{X^*}$ be an $\ell$-Bessel for $\mathrm{X}$  and $F=\{f_i\} \subset \mathrm{X}$ be an  $\ell^*$-Bessel for $\mathrm{X^*}$. Then $( G,F)  $ is a pair Bessel for $\mathrm{X}$ w.r.t. $\ell$.
\end{thm}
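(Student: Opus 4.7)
My plan is to build a bounded synthesis-type operator $T:\ell\to X$ satisfying $T(\delta_j)=f_j$ and then obtain $S=T\circ U_G$ as the composition of two bounded operators, which forces the defining series $\sum\langle f,g_i\rangle f_i$ to converge in norm. The reflexivity of $X$ and the identification of $\ell^*$ with $\ell^\circledast$ furnished by Lemma \ref{l: l Schauder sequence space then l  Schauder sequence sp} are the two ingredients that make $T$ exist.

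First I would fix $\phi\in X^*$. Because $F$ is $\ell^*$-Bessel for $X^*$, the sequence $\{\langle\phi,f_i\rangle\}$ lies in $\ell^*=\ell^\circledast$ with norm at most $B\|\phi\|$, so Lemma \ref{l: l Schauder sequence space then l  Schauder sequence sp} produces a functional $\phi'\in\ell^*$ acting on any $\{c_i\}\in\ell$ by the convergent series
\[
\phi'(\{c_i\})=\sum c_i\langle\phi,f_i\rangle,\qquad \|\phi'\|\le B\|\phi\|.
\]
Next, for each fixed $\{c_i\}\in\ell$, I would consider the map $\phi\mapsto\sum c_i\langle\phi,f_i\rangle$. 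It is linear in $\phi$ and by the above estimate bounded by $B\|\{c_i\}\|_\ell\|\phi\|$, so it represents an element of $X^{**}$. Reflexivity of $X$ then yields a unique $T(\{c_i\})\in X$ such that
\[
\phi\bigl(T(\{c_i\})\bigr)=\sum c_i\langle\phi,f_i\rangle,\qquad \forall\phi\in X^*.
\]
The resulting operator $T:\ell\to X$ is linear with $\|T\|\le B$, and testing $\{c_i\}=\delta_j$ shows $\phi(T(\delta_j))=\langle\phi,f_j\rangle$ for every $\phi$, hence $T(\delta_j)=f_j$.

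The final step uses the Schauder basis property: since $\{\delta_i\}$ is a basis for $\ell$, the partial sums $\sum_{i=1}^n c_i\delta_i$ converge to $\{c_i\}$ in $\ell$, and continuity of $T$ gives
\[
T(\{c_i\})=\lim_n\sum_{i=1}^n c_i T(\delta_i)=\sum c_i f_i
\]
with convergence \emph{in norm} in $X$. Feeding $\{c_i\}=\{\langle f,g_i\rangle\}=U_G(f)$ (which lies in $\ell$ by hypothesis) shows that $S(f)=\sum\langle f,g_i\rangle f_i$ converges for every $f\in X$ and equals the bounded composition $T\circ U_G(f)$, proving that $(G,F)$ is a pair Bessel for $X$ w.r.t. $\ell$.

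The subtle point is promoting convergence from weak to norm: a direct calculation shows only that $\phi(\sum_{i=1}^n\langle f,g_i\rangle f_i)$ has a limit for every $\phi$, which alone would give weak convergence in the reflexive space but not the norm convergence needed to make $S$ well defined in the sense of Definition \ref{d:pair frame}. The Schauder basis hypothesis on $\ell$ is precisely what bridges this gap, by allowing the continuous operator $T$ constructed via reflexivity to be evaluated as a genuinely norm-convergent series.
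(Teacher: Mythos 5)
Your proof is correct, but it is organized differently from the paper's. The paper argues directly on the partial sums: for $n<m$ it computes $\|\sum_{i=n}^{m}\langle f,g_i\rangle f_i\|$ by duality as a supremum over unit vectors $g\in X^*$, uses Lemma \ref{l: l Schauder sequence space then l  Schauder sequence sp} to read the inner sum as $\phi_g$ applied to the truncated coefficient sequence, bounds it by $B\,\|\{\langle f,g_i\rangle\}_{i=n}^{m}\|$, and concludes via the Cauchy criterion (the tail norm vanishing because $\{\delta_i\}$ is a basis). You instead first manufacture the synthesis operator $T:\ell\to X$ with $T(\delta_j)=f_j$ by viewing $\{c_i\}\mapsto\sum c_i\langle\phi,f_i\rangle$ as an element of $X^{**}$ and invoking reflexivity, then factor $S=T\circ U_G$. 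The underlying estimate is the same in both arguments (Lemma \ref{l: l Schauder sequence space then l  Schauder sequence sp} plus the $\ell^*$-Bessel bound of $F$ give the constant $B$), but the packaging buys different things: your route produces the bounded operator $T$ explicitly, gives the quantitative bound $\|S\|\leqslant B\,\|U_G\|$, and exhibits $(G,F)$ as a Banach pair Bessel in the sense of Definition \ref{d:Banach pair frame  w.r.t. l}, whereas the paper's route is more economical in hypotheses --- it never needs the reflexivity of $X$, since norm convergence comes from completeness via the Cauchy tails rather than from identifying $X^{**}$ with $X$. Your closing remark about weak versus norm convergence is well taken and is exactly the point the basis hypothesis on $\ell$ addresses in both arguments.
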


\begin{proof}
For $m,n \in  \mathbb{N}$ with  $n < m$ and for any $f \in \mathrm{X}$,
\[  \| \sum_{i=n}^{m} \langle f , g_i \rangle f_i \| = \sup_{\|g\|=1, g \in \mathrm{X^*}} | \sum_{i=n}^{m} \langle f , g_i \rangle  \langle f_i, g \rangle | .\]
Lemma \ref{l: l Schauder sequence space then l  Schauder sequence sp} yields that for every $g \in \mathrm{X^*}$ there is a $\phi_g \in \ell^* $ such that ${ \{   \langle f_i, g   \rangle  \}}_{i=1}^{\infty}$ as an element of $\ell^* $ can be rewritten in the form of $ \{\phi_g( \delta_i)\}_{i=1}^{\infty}$.
Hence
\begin{align*}
\| \sum_{i=n}^{m} \langle f , g_i \rangle f_i \|
& =  \sup_{\|g\|=1, g \in \mathrm{X^*}}  | \langle \sum_{i=n}^{m} \langle f, g_i\rangle f_i , g \rangle | \\
& =  \sup_{\|g\|=1, g \in \mathrm{X^*}}  |  \sum_{i=n}^{m} \langle f, g_i\rangle \langle f_i , g \rangle |   \\
& =  \sup_{\|g\|=1, g \in \mathrm{X^*}}  | \langle  { \{ \langle f , g_i \rangle \} }_{i=n}^{m},   { \{ \langle  f_i, g   \rangle \} }_{i=n}^{m}      \rangle |   \\
& =  \sup_{\|g\|=1, g \in \mathrm{X^*}}  | \langle  { \{ \langle f , g_i \rangle \} }_{i=n}^{m},   { \{ \langle f_i, g     \rangle \} }_{i=1}^{\infty}   \rangle |   \\
& =  \sup_{\|g\|=1, g \in \mathrm{X^*}}  | \phi_g ( { \{ \langle f , g_i \rangle \} }_{i=n}^{m} )|   \\
&   \leqslant   \sup_{\|g\|=1, g \in \mathrm{X^*}}  \| \phi_g \| \| { \{ \langle f , g_i \rangle \} }_{i=n}^{m}\|  \\
& =  \sup_{\|g\|=1, g \in \mathrm{X^*}}   { \| { \{ \langle f_i, g \rangle \}}_{i=1}^{\infty} \|}_{\ell^*}  \| { \{ \langle f , g_i \rangle \} }_{i=n}^{m} \| \\
&   \leqslant ( \sup_{\|g\|=1, g \in \mathrm{X^*}}  B \| g \|)  \| { \{ \langle f , g_i \rangle \} }_{i=n}^{m} \| \\
&  = B \| { \{ \langle f , g_i \rangle \} }_{i=n}^{m} \|,
\end{align*}
where $B$ denotes the upper Bessel bound of  $F=\{f_i\}$. The last value tends to zero when m and n tends to infinity. Therefore $ \sum_{}^{} \langle f , g_i \rangle f_i $ converges for every $ f \in \mathrm{X}$.
\end{proof}
 \begin{cor} \label{c:}
With the assumptions of the above theorem, if additionally $\ell^*$ is a Schauder sequence space, then $( F,G)  $ is a pair Bessel for $\mathrm{X^*}$.
\end{cor}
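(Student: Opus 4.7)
The plan is to mirror the argument of Theorem~\ref{t: (l l star) pair Bessel } with the roles of $(G,\mathrm{X},\ell)$ and $(F,\mathrm{X^*},\ell^*)$ interchanged; the additional hypothesis that $\ell^*$ be a Schauder sequence space is precisely what allows the previous proof to be replayed in the opposite direction, while the reflexivity of $\mathrm{X}$ lets one view $F\subset \mathrm{X}=\mathrm{X}^{**}$ as a family of functionals on $\mathrm{X^*}$, so that the pair $(F,G)$ for $\mathrm{X^*}$ has associated operator
\[ S(g^*)=\sum\langle f_i,g^*\rangle g_i. \]

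Concretely, I would fix $g^*\in \mathrm{X^*}$ and show that the partial sums of $\sum\langle f_i,g^*\rangle g_i$ form a Cauchy sequence in $\mathrm{X^*}$. For $n<m$ in $\mathbb{N}$, I would rewrite the norm of the tail via the duality $\mathrm{X^*}=\mathrm{X}^{**}{}^*|_{\mathrm X}$ as
\[ \Bigl\|\sum_{i=n}^{m}\langle f_i,g^*\rangle g_i\Bigr\|=\sup_{\|f\|=1,\,f\in \mathrm{X}}\Bigl|\sum_{i=n}^{m}\langle f_i,g^*\rangle\langle f,g_i\rangle\Bigr|, \]
and then invoke Lemma~\ref{l: l Schauder sequence space then l  Schauder sequence sp} applied to the Schauder space $\ell$ in order to identify the truncated sequence $\{\langle f_i,g^*\rangle\}_{i=n}^{m}\in \ell^*$ with a functional on $\ell$ whose operator norm equals its $\ell^*$-norm. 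Paired with $\{\langle f,g_i\rangle\}\in \ell$, whose $\ell$-norm is bounded by $B\|f\|$ (with $B$ the $\ell$-Bessel bound of $G$), this produces the key estimate
\[ \Bigl\|\sum_{i=n}^{m}\langle f_i,g^*\rangle g_i\Bigr\|\leq B\,\bigl\|\{\langle f_i,g^*\rangle\}_{i=n}^{m}\bigr\|_{\ell^*}. \]

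The new hypothesis then closes the argument cleanly: because $\ell^*$ is itself Schauder, the element $\{\langle f_i,g^*\rangle\}\in \ell^*$ coincides with the norm-convergent series $\sum\langle f_i,g^*\rangle\delta_i$, so its tails $\|\{\langle f_i,g^*\rangle\}_{i=n}^{m}\|_{\ell^*}$ tend to zero as $n,m\to\infty$. Hence the partial sums of $\sum \langle f_i,g^*\rangle g_i$ are Cauchy in the Banach space $\mathrm{X^*}$ and converge, making $(F,G)$ a pair Bessel for $\mathrm{X^*}$. The step I expect to demand the most care is the bookkeeping of the dualities: verifying that the isometric identification $\ell^*\cong \ell^\circledast$ from Lemma~\ref{l: l Schauder sequence space then l  Schauder sequence sp} really turns the scalar sum $\sum\langle f_i,g^*\rangle\langle f,g_i\rangle$ into an $\ell^*$-versus-$\ell$ duality bracket with a bound that is sharp and, crucially, uniform in $f$ on the unit ball of $\mathrm{X}$.
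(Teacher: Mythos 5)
Your proposal is correct and is essentially the paper's intended argument: the corollary is meant to follow by applying Theorem~\ref{t: (l l star) pair Bessel } with the roles of $(\mathrm{X},\ell,G,F)$ and $(\mathrm{X^*},\ell^*,F,G)$ interchanged, which is exactly the symmetric replay of the Cauchy-tail estimate you carry out (with the hypothesis that $\ell^*$ is Schauder guaranteeing that the tails of $\{\langle f_i,g^*\rangle\}$ vanish in $\ell^*$). Your version has the minor virtue of working only with the $\ell$--$\ell^*$ duality directly, so you never need to pass through $\ell^{**}$ to check that $G$ is an $(\ell^*)^*$-Bessel, but this is a presentational difference rather than a different proof.
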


  \section{ \textbf{Frames for Dual Banach Spaces} }

It may seem that when $(G,F)$ is a pair Bessel (frame) for $ \mathrm{X}$, one can  conclude that $(F,G)$ is a pair Bessel (frame) for $ \mathrm{X^*}$. This is not true even for reflexive spaces  $ \mathrm{X}$. Furthermore there are examples for which $F=\{f_i\}$ and $G=\{g_i\} $ are not pairable for $\mathrm{X^*}$ even in Hilbert space setting; see Example 4.2 \cite{2008 The reconstruction property in Banach spaase Casazza and Ole Christensen}.

At the continue   we study some conditions under which a (Banach) pair Bessel (frame) for $ \mathrm{X}$   induces a (Banach) pair Bessel (frame) for $ \mathrm{X^*}$. But at first we state  some lemmas.
\begin{lem} \label{l:  U from X to l and l Bessels}
Let $\ell$ be an (unconditional) BK-space and $U:  \mathrm{X}\rightarrow \ell$ be a  bounded operator. Then there is an (unconditional) $\ell$-Bessel $H=\{h_i\} \subset \mathrm{X^*} $ such that $ U(f)= \{ \langle f , h_i \rangle\}$ for every $f \in  \mathrm{X}$. The Bessel bound of  $H=\{h_i\}$ is $\| U \|$.
\end{lem}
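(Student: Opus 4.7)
The plan is to define the desired functionals $h_i$ via the coordinate functionals of $\ell$, exploiting the BK-space structure. Since $\ell$ is a BK-space, by definition each coordinate functional
\[ \pi_i : \ell \to \mathbb{C}, \quad \pi_i(\{c_j\}) = c_i, \]
is bounded. Composing with the given bounded operator $U$, set
\[ h_i := \pi_i \circ U \in \mathrm{X}^*, \quad i \in \mathbb{I}. \]
This is the only reasonable candidate, since applying a coordinate functional to $U(f)$ must recover the $i$-th entry of $U(f)$, and that entry has to be $\langle f, h_i \rangle$ by the desired formula.

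Next I would verify the identity $U(f) = \{\langle f, h_i\rangle\}$ coordinatewise: for each $i$, the $i$-th entry of $U(f)$ equals $\pi_i(U(f)) = h_i(f) = \langle f, h_i\rangle$ by construction. With this identity in place, the Bessel estimate is immediate: for every $f \in \mathrm{X}$,
\[ \|\{\langle f, h_i\rangle\}\| = \|U(f)\|_{\ell} \leqslant \|U\|\,\|f\|, \]
so condition (1) of Definition \ref{d:l-bessel} holds (the sequence lies in $\ell$ because it equals $U(f) \in \ell$) and condition (2) holds with bound $\|U\|$.

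For the unconditional case, suppose $\ell$ is an unconditional Banach scalar sequence space. Then for any permutation $\sigma$ and any $f \in \mathrm{X}$, the sequence $\{\langle f, h_{\sigma(i)}\rangle\}$ is obtained by permuting the entries of $U(f) \in \ell$. By Definition \ref{d:Unconditional scalar Sequence} this permuted sequence lies in $\ell$ with the same norm, and hence still satisfies the bound $\|U\|\,\|f\|$. This gives the unconditional $\ell$-Bessel property with the same bound $\|U\|$.

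I do not expect any serious obstacle: the proof is essentially a one-line identification, with the only care needed being (i) invoking the BK-space hypothesis to make the definition $h_i = \pi_i \circ U$ legitimate in $\mathrm{X}^*$, and (ii) invoking the unconditional-BK hypothesis to handle the parenthetical case.
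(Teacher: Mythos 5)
Your proposal is correct and is essentially identical to the paper's proof: defining $h_i = \pi_i \circ U$ is exactly the paper's $h_i = U^*\eta_i$ with $\eta_i$ the coordinate functionals, and the verification of the identity, the bound $\|U\|$, and the treatment of the unconditional case via norm-invariance under permutation all match.
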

\begin{proof}
We prove the unconditional case; the proof  of general  case is in a similar way. Let $\sigma$ be a permutation of $\mathbb{I}$.
Since $\ell$ is a BK-space, the coordinate functionals $\{ \eta_i\} \subset \ell^*$ are continuous. Thus $h_{\sigma (i)}:= U^* \eta_{\sigma (i)} $'s are bounded functionals for all $i \in \mathbb{I}$. Hence  $ \{h_{\sigma (i)}\} \subset  \mathrm{X^*}$. For $f \in \mathrm{X}$,
\[ U_{}(f)= \{ \langle U_{}f , \eta_{i} \rangle \} = \{ \langle f , U^* \eta_{i} \rangle \} = \{ \langle f , h_{i} \rangle \} .\]
and
$$ \{  \langle f , h_{\sigma(i)} \rangle \}= \{ \langle f, U^* \eta_{\sigma(i)} \rangle \}= \{ \langle  Uf, \eta_{\sigma(i)}  \rangle \} .$$
On the other hand since $\ell$ is an unconditional BK-space
\[ \begin{split}
 \| \{  \langle f , h_{\sigma(i)} \rangle \} \|= \|\{ \langle  Uf, \eta_{\sigma(i)}  \rangle \}  \|=\| \{ \langle  Uf, \eta_{i}  \rangle \} \|=\|  Uf \|.
\end{split} \]
Then for all $f \in \X$ and permutation $\sigma$ of $\mathbb{I}$,
 \[\| \{ \langle f , h_{\sigma (i)} \rangle \} \|= \| U_{ }(f) \|   \leqslant   \| U\| \| f \|  .\]
Hence $ H=\{h_i\} \subset$ is an unconditional $\ell$-Bessel for $\mathrm{X}$ with bound $\| U\|$.

\end{proof}
\begin{lem} \label{l: weak convergence unconditional convergence}
\cite{1998 Basis Theory Primer Heil,1993 Uber unbedingte Orlicz,1938 On Integration Pettis}
For a sequence $H=\{h_i\}\subset\mathrm{X}$, the followings are equivalent.
\begin{enumerate}
\item $\sum h_i $ converges unconditionally.
\item $\sum h_{i_{k}} $ converges  for every $ \{h_{i_{k}}\} \subset \{h_i\}. $
\item $\sum h_{i_{k}} $ converges weakly for every $ \{h_{i_{k}}\} \subset \{h_i\}. $
\end{enumerate}
\end{lem}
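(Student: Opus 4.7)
The plan is to establish the cycle $(1) \Rightarrow (2) \Rightarrow (3) \Rightarrow (1)$, with the expectation that the hardest step, by a wide margin, is $(3) \Rightarrow (1)$: this is precisely the classical Orlicz--Pettis theorem. The other two implications are routine and can be reduced to the standard Cauchy-type characterization of unconditional convergence.

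For $(1) \Rightarrow (2)$ I would first recall the equivalent finite-subset Cauchy form of unconditional convergence: for every $\varepsilon>0$ there is a finite set $F_0 \subset \mathbb{I}$ such that $\|\sum_{i\in F} h_i\| < \varepsilon$ for every finite $F \subset \mathbb{I}$ with $F \cap F_0 = \emptyset$. Given a subsequence $\{h_{i_k}\}$, since $F_0$ is finite only finitely many $i_k$ belong to $F_0$, so for all $M$ beyond some $N_\varepsilon$ the tail sums $\sum_{k=M}^{n} h_{i_k}$ are finite sums over a set disjoint from $F_0$ and therefore have norm less than $\varepsilon$; this yields Cauchy, hence norm, convergence of the subseries. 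The implication $(2) \Rightarrow (3)$ is immediate, because norm convergence in $\mathrm{X}$ implies weak convergence.

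The substance of the lemma lies in $(3) \Rightarrow (1)$, which is the Orlicz--Pettis theorem. My strategy is to split it into two halves. First I would upgrade $(3)$ to the assertion that every subseries converges in norm (this is the Orlicz--Pettis step itself), and then close the cycle via the easy implication $(2) \Rightarrow (1)$: if unconditional convergence failed, the negation of the Cauchy characterization would allow one to inductively extract disjoint finite blocks of indices with non-vanishing partial sums, and concatenating one representative index per block produces a subsequence whose series is not even Cauchy, contradicting $(2)$. For the Orlicz--Pettis step one may either simply cite \cite{1998 Basis Theory Primer Heil,1993 Uber unbedingte Orlicz,1938 On Integration Pettis} as the statement of the lemma already does, or sketch the classical proof: every subseries being weakly convergent makes the collection $\{\sum_{k\in A} h_{i_k} : A \subset \mathbb{I} \text{ finite}\}$ weakly bounded, hence norm bounded by uniform boundedness, after which a Hahn--Banach / measure-theoretic argument (treating $A \mapsto \sum_{k \in A} h_{i_k}$ as a vector measure on $2^{\mathbb{I}}$) promotes weak countable additivity to norm countable additivity.

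The only genuine obstacle is this weak-to-norm promotion; everything else is bookkeeping around the Cauchy characterization of unconditional convergence. Accordingly, I would present $(1) \Leftrightarrow (2)$ and $(2) \Rightarrow (3)$ in full but brief detail, and invoke \cite{1938 On Integration Pettis} (with the expository treatment in \cite{1998 Basis Theory Primer Heil}) for $(3) \Rightarrow (2)$, which is the route the authors appear to have chosen by placing all three references beside the lemma.
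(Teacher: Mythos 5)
The paper does not actually prove this lemma: it is stated as known, with the three references attached, so your plan of deferring the Orlicz--Pettis implication to Pettis' paper and Heil's primer while writing out the elementary implications yourself is entirely consistent with (indeed more detailed than) what the authors do. Your cycle $(1)\Rightarrow(2)\Rightarrow(3)\Rightarrow(1)$, with $(1)\Rightarrow(2)$ handled by the finite-subset Cauchy criterion for unconditional convergence and $(2)\Rightarrow(3)$ being the trivial observation that norm convergence implies weak convergence, is the standard and correct organization.

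One step, as literally written, would fail. In your reduction $(2)\Rightarrow(1)$ you negate the Cauchy criterion to obtain some $\varepsilon>0$ and disjoint finite blocks $F_1,F_2,\dots$ (which may be chosen with $\max F_n<\min F_{n+1}$) satisfying $\|\sum_{i\in F_n}h_i\|\geqslant\varepsilon$, and then claim that \emph{concatenating one representative index per block} produces a non-Cauchy subseries. Selecting a single index from each block proves nothing: the individual terms inside a block can be arbitrarily small in norm even though the block sums stay bounded away from zero, so that particular subsequence may well yield an absolutely convergent series. The correct move is to take as your subseries \emph{all} the terms indexed by $\bigcup_n F_n$, enumerated in increasing order; then the two partial sums that straddle the $n$-th block differ by $\sum_{i\in F_n}h_i$, whose norm is at least $\varepsilon$, so this subseries is not Cauchy, contradicting $(2)$. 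With that one-line repair your argument is complete modulo the cited Orlicz--Pettis theorem, which is exactly the part the paper itself leaves to the literature.
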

\begin{thm}\label{t:adjoint of pair frames(Bessel)}
Let $\ell$ be a BK-space, $F=\{f_i\} \subset \mathrm{X}$ and  $G=\{g_i\} \subset \mathrm{X^*}$. Also assume that $T: \ell \rightarrow \mathrm{X}$ is a bounded operator.
 \begin{enumerate}
 \item  If  $(G,F)$ and $(F,G)$ are pair Bessels for $ \mathrm{X}$ and $ \mathrm{X^*}$ respectively, then $S_{FG}^{*}= S_{GF}$. In this situation, $(G,F)$ is a pair frame for $ \mathrm{X}$ if and only if $(F,G)$ is a pair frame for $ \mathrm{X^*}$.
 \item  $( G,F)$ is an unconditional pair frame (Bessel) for $ \mathrm{X}$ if and only if $(F,G)$ is an unconditional pair frame (Bessel) for $ \mathrm{X^*}$. Then $S_{FG}^{*}= S_{GF}$.
\item  Suppose that $(G,T)$ is a  Banach pair frame (Bessel) for $ \mathrm{X}$ w.r.t. $\ell$. Then there exists a family $H=\{h_i\}\subset\mathrm{X^*}$ such that $(H ,U_G^*)$ is a  Banach pair frame (Bessel) for $ \mathrm{X^*}$ w.r.t. $\ell^*$ with pair frame (Bessel) operator  $U_G^* U_H=S_{TG}^*$.
\item Let $\ell$ and $\ell^*$ be Schauder sequence spaces. $(G.F)$ is an $(\ell,\ell^*)$-pair frame (Bessel) for $ \mathrm{X}$ if and only if $(F,G)$ is an $(\ell^*,\ell)$-pair frame (Bessel) for $ \mathrm{X^*}$. Then $S_{FG}^{*}= S_{GF}$.
 \end{enumerate}
\end{thm}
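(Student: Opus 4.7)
For Part (1), I will verify the identity $S_{FG}^{*}=S_{GF}$ by a direct termwise computation using the duality pairing. For $f\in\X$ and $g\in\X^{*}$, the convergence of $\sum\langle f,g_i\rangle f_i$ in $\X$ (hypothesis on $(G,F)$) combined with the continuity of $g$ yields
\[\langle S_{FG}(f),g\rangle=\sum\langle f,g_i\rangle\langle f_i,g\rangle.\]
Symmetrically, the convergence of $\sum\langle f_i,g\rangle g_i$ in $\X^{*}$ (hypothesis on $(F,G)$), together with the reflexivity identification $\X=\X^{**}$ that lets us view $f$ as a continuous functional on $\X^{*}$, gives
\[\langle f,S_{GF}(g)\rangle=\sum\langle f,g_i\rangle\langle f_i,g\rangle.\]
Equating these two expressions proves $S_{FG}^{*}=S_{GF}$, and the pair-frame equivalence is then immediate from the standard fact that a bounded operator on a Banach space is invertible iff its adjoint is.

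For Part (2), a direct appeal to (1) is not available because only unconditional convergence of one side is assumed. My plan is to use the Orlicz--Pettis-type Lemma \ref{l: weak convergence unconditional convergence} twice. Assume $(G,F)$ is unconditional pair Bessel for $\X$. The lemma turns unconditional convergence of $\sum\langle f,g_i\rangle f_i$ into weak convergence of every subseries, which, upon pairing with $g$, becomes convergence of every subseries of the scalar sum $\sum\langle f,g_i\rangle\langle f_i,g\rangle$. Reading this same scalar sum as the weak-sum test of $\sum\langle f_i,g\rangle g_i$ against $f\in\X=\X^{**}$, a second application of Lemma \ref{l: weak convergence unconditional convergence} inside $\X^{*}$ yields unconditional convergence of $\sum\langle f_i,g\rangle g_i$ in $\X^{*}$ for every $g$. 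With both sides now unconditional pair Bessels, Part (1) supplies the adjoint identity and the frame equivalence.

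For Part (3), the key is the factorization $S_{TG}=T\circ U_G$, which is immediate from $S_{TG}(f)=T(\{\langle f,g_i\rangle\})=T(U_G f)$. Its adjoint is
\[S_{TG}^{*}=U_G^{*}\,T^{*},\]
with $T^{*}:\X^{*}\to\ell^{*}$ bounded. Applying Lemma \ref{l:  U from X to l and l Bessels} to $T^{*}$ produces a sequence $H=\{h_i\}$ that is an $\ell^{*}$-Bessel for $\X^{*}$ and satisfies $T^{*}=U_H$. Consequently $(H,U_G^{*})$ is a Banach $\ell^{*}$-pair Bessel for $\X^{*}$ whose associated operator is $U_G^{*}U_H=S_{TG}^{*}$, and invertibility of $S_{TG}$ transfers to $S_{TG}^{*}$ by the standard duality. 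Part (4) then follows by combining Theorem \ref{t: (l l star) pair Bessel } and Corollary \ref{c:} (which simultaneously produce $(G,F)$ and $(F,G)$ as pair Bessels for $\X$ and $\X^{*}$ respectively whenever $\ell,\ell^{*}$ are Schauder) with Part (1).

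The main obstacle I anticipate lies in Part (3): one must carefully reconcile the definition of a Banach pair Bessel on the dual side, where the Bessel sequence a priori lives in $(\X^{*})^{*}=\X^{**}$ and is identified with a subset of $\X$ via reflexivity, and one must verify that the $H$ returned by Lemma \ref{l:  U from X to l and l Bessels} satisfies $T^{*}=U_H$ as operators (not merely coordinatewise) so that the factorization $S_{TG}^{*}=U_G^{*}U_H$ is genuinely the pair-Bessel operator of $(H,U_G^{*})$.
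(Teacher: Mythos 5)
Your proposal is correct and follows essentially the same route as the paper's proof: part (1) by the same termwise duality computation, part (2) by the same double application of Lemma \ref{l: weak convergence unconditional convergence}, part (3) by the factorization $S_{TG}^{*}=U_G^{*}T^{*}$ together with Lemma \ref{l:  U from X to l and l Bessels} applied to $T^{*}$, and part (4) via Theorem \ref{t: (l l star) pair Bessel }. Your closing remark about $H$ living in $\mathrm{X}^{**}=\mathrm{X}$ is exactly the identification the paper makes in its own proof of part (3).
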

\begin{proof}
The pair Bessel case of the claims are included  here. Since the invertibility of adjoin of an operator is equivalent to the invertibility of the  operator, itself, the pair frame case of the claims can be concluded  conveniently.\\
(1). Since $(G,F)$ and $(F,G)$ are pair Bessels for $ \mathrm{X}$ and $ \mathrm{X}^*$ respectively, then $S_{FG}$ and $S_{GF}$ are well defined and for $f \in \mathrm{X}$ and $g \in \mathrm{X^*}$,
\[ \begin{split}
 \langle S_{FG} (f)  , g \rangle = \langle  \sum \langle  f , g_i \rangle f_i , g \rangle = \sum \langle  f , g_i \rangle \langle f_i  , g \rangle = \sum \langle  f , g_i \rangle  \langle f_i  , g \rangle \\ = \langle f  , S_{GF}(g) \rangle.
\end{split}
 \]
 Thus $ S_{FG}^*=  S_{GF} $.\\
(2). We restate  the proof from  \cite{2011 Pair Frames- A. Fereydooni A. Safapour}. The Lemma \ref{l: weak convergence unconditional convergence} is used frequently. $(G,F)$ is an unconditional pair Bessel for $ \mathrm{X}$ if and only if $ \sum \langle  f , g_i \rangle f_i$ converges unconditionally for all $ f \in \mathrm{X}$. This is equivalent to the weak convergence of each  of  its subseries. Equivalently, for  every  $ \{f_{i_k} \} \subset \{f_i\} $  and $ \{g_{i_k} \} \subset \{g_i\} $,
\[  \langle \sum \langle  f , g_{i_k} \rangle f_{i_k} , g \rangle =   \langle  f , \sum  \langle f_{i_k}  , g \rangle g_{i_k} \rangle , \]
for $f \in \mathrm{X}$ and $g \in \mathrm{X^*}$. This means that $ \sum  \langle f_{i_k}  , g \rangle g_{i_k}$ converges weakly for every $g \in \mathrm{X^*}$. Namely $ \sum  \langle f_{i}  , g \rangle g_{i}$ converges unconditionally for all $g \in \mathrm{X^*}$. This leads to the fact that $(F,G)$ being  an unconditional pair Bessel for $ \mathrm{X^*}$.\\

(3). Suppose that  $(G,T)$ is a Banach pair Bessel for $ \mathrm{X}$ w.r.t. $\ell$. So we get bounded operators $T^*: \mathrm{X^*}  \rightarrow \ell^*$ and $U_G^*: \ell^* \rightarrow \mathrm{X^*}$.  $\ell^*$ is a BK-space, by  Lemma \ref{l: l Schauder sequence space then l  Schauder sequence sp}. Using  Lemma \ref{l:  U from X to l and l Bessels} for $\ell^*$ and the bounded operator $T^*: \X^* \rightarrow \ell^*$, the $\ell^*$-Bessel  $ H=\{h_i\} \subset  \mathrm{X^{**}}=\X$ can be obtained such that for every $g \in \mathrm{X^*}$,
\[ T^*(g)= \{ \langle g , h_i \rangle \}=U_H(g).\]
Therefore
$$S_{TG}^*={(TU_G)}^*= U_G^* T^*=U_G^* U_H$$
 and $(H ,U_G^*)$ is a Banach pair Bessel for $ \mathrm{X^*}$ w.r.t. $\ell^*$ with the pair Bessel operator $U_G^* U_H=S_{TG}^*$.\\ 
(4). Assume  that $(G,F)$ is an $(\ell,\ell^*)$-pair Bessel for $ \mathrm{X}$. Then $G=\{g_i\}$ and $F=\{f_i\}$ are Bessels w.r.t. $\ell$ and $\ell^*$,  respectively. Consequently by Theorem \ref{t: (l l star) pair Bessel },  $( F,G)$ is an $(\ell^*,\ell)$-pair Bessel for $ \mathrm{X^*}$. The proof of the converse is the same.
\end{proof}
\begin{cor} \label{c:}
Let $\ell$ be a BK-space, $F=\{f_i\} \subset \mathrm{X}$ and  $G=\{g_i\} \subset \mathrm{X^*}$. Assume that  $T: \ell \rightarrow \mathrm{X}$ is a bounded operator.
 \begin{enumerate}
 \item  Suppose that $(G,F)$ and $(F,G)$ are pair Bessels for $ \mathrm{X}$ and $ \mathrm{X^*}$ respectively. Then $(G,F)$ is a Schauder frame for $ \mathrm{X}$ if and only if $(F,G)$ is a Schauder frame for $ \mathrm{X^*}$.

\item  $(G,F)$ is an unconditional Schauder frame  for $ \mathrm{X}$ if and only if $(F,G)$ is an unconditional Schauder frame  for $ \mathrm{X^*}$
\item  Suppose that $(G,T)$ is a  Banach frame for $ \mathrm{X}$ w.r.t. $\ell$. Then there exists a family $H=\{h_i\}\subset\mathrm{X^*}$ such that $(H ,T_G)$ is a  Banach frame for $ \mathrm{X^*}$ w.r.t. $\ell^*$.

\item Let $\ell$ and $\ell^*$ be  Schauder sequence spaces. $(G,F)$ is an $(\ell,\ell^*)$-atomic decomposition for $ \mathrm{X}$ if and only if $(F,G)$ is an $(\ell^*,\ell)$-atomic decomposition for $ \mathrm{X^*}$.
\end{enumerate}
\end{cor}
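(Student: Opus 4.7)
The guiding observation is that Schauder frames, Banach frames, and atomic decompositions are precisely the pair frames (or Banach pair frames) whose associated reconstruction operator $S$ equals the identity, supplemented by additional Bessel-type bounds. Consequently, each of the four items is a direct reduction to the corresponding item of Theorem~\ref{t:adjoint of pair frames(Bessel)}, once one remarks that $I_{\X}^{*}=I_{\X^{*}}$ and checks that the frame bounds survive the passage to the dual side.

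For (1), the assumption that $(G,F)$ is a Schauder frame for $\X$ is exactly $S_{FG}=I_{\X}$. Theorem~\ref{t:adjoint of pair frames(Bessel)}(1), available because $(G,F)$ and $(F,G)$ are standing pair Bessels, gives $S_{GF}=S_{FG}^{*}=I_{\X^{*}}$, which is the Schauder-frame identity for $(F,G)$ on $\X^{*}$. Part (2) is identical, except that no standing pair-Bessel hypothesis is needed: Theorem~\ref{t:adjoint of pair frames(Bessel)}(2) already extracts the pair-Bessel property on $\X^{*}$ from the unconditional hypothesis, and Lemma~\ref{l: weak convergence unconditional convergence} lifts unconditional convergence of the $\X$-expansion to unconditional convergence of the $\X^{*}$-expansion.

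For (3), apply Theorem~\ref{t:adjoint of pair frames(Bessel)}(3) to the Banach frame $(G,T)$. Since $(G,T)$ is a frame we have $S_{TG}=I_{\X}$, so the supplied Banach pair Bessel $(H,U_{G}^{*})$ for $\X^{*}$ has operator $U_{G}^{*}U_{H}=S_{TG}^{*}=I_{\X^{*}}$, delivering the reconstruction formula of Definition~\ref{d:unconditional Banach Frames}. The upper analysis bound $\|\{\langle g,h_{i}\rangle\}\|\leq\|T\|\,\|g\|$ comes from Lemma~\ref{l:  U from X to l and l Bessels} applied to $T^{*}:\X^{*}\to\ell^{*}$. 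The lower bound is the content that does not appear in Theorem~\ref{t:adjoint of pair frames(Bessel)}(3) itself, and is obtained directly from $g=U_{G}^{*}(\{\langle g,h_{i}\rangle\})$ by
\[
\|g\|\leq \|U_{G}^{*}\|\,\|\{\langle g,h_{i}\rangle\}\|=\|U_{G}\|\,\|\{\langle g,h_{i}\rangle\}\|.
\]

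For (4), combine Theorem~\ref{t:adjoint of pair frames(Bessel)}(4) with item~(1) (or (2)) above. An $(\ell,\ell^{*})$-atomic decomposition $(G,F)$ on $\X$ is an $(\ell,\ell^{*})$-pair frame with $S_{FG}=I_{\X}$, so Theorem~\ref{t:adjoint of pair frames(Bessel)}(4) gives the $(\ell^{*},\ell)$-pair frame $(F,G)$ on $\X^{*}$ with $S_{GF}=I_{\X^{*}}$; unconditional convergence of $g=\sum\langle f_{i},g\rangle g_{i}$ then follows from the Schauder-frame duality of~(2) together with Lemma~\ref{l: weak convergence unconditional convergence}. The upper analysis bound on $\X^{*}$ is the $\ell^{*}$-Bessel bound of $F$; the lower bound comes from $\|g\|=\|T_{G}U_{F}(g)\|\leq\|T_{G}\|\,\|U_{F}(g)\|$, with $T_{G}$ bounded on $\mathcal{R}(U_{F})$ by the pair-Bessel property. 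The residual Bessel condition that $G$ be $\ell$-Bessel for $\X^{**}=\X$ is merely the standing hypothesis; reflexivity of $\X$ combined with Lemma~\ref{l: l Schauder sequence space then l  Schauder sequence sp} takes care of the reverse direction.

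The single genuine obstacle is the transfer of the \emph{lower} frame bound to $\X^{*}$. Nothing in the pair-frame duality of Theorem~\ref{t:adjoint of pair frames(Bessel)} guarantees a lower analysis bound on the dual side; it must be extracted, in every clause above, by exploiting $S=I$ to factor $I_{\X^{*}}$ through the appropriate dual synthesis operator, thereby converting the reconstruction identity into the desired inequality. With this ingredient in hand, each item of the corollary is a routine restatement of the corresponding item of Theorem~\ref{t:adjoint of pair frames(Bessel)}.
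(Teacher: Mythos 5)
Your proposal takes essentially the same route as the paper, whose entire proof of this corollary is the single line ``only put $S=I$ in Theorem~\ref{t:adjoint of pair frames(Bessel)}.'' Your additional care in extracting the lower analysis bounds on $\X^{*}$ from the reconstruction identity (e.g.\ $\|g\|\leqslant\|U_{G}^{*}\|\,\|\{\langle g,h_{i}\rangle\}\|$ in item (3)) correctly supplies a step that the paper's one-liner leaves entirely implicit, since Theorem~\ref{t:adjoint of pair frames(Bessel)} by itself says nothing about lower bounds; this is a genuine and worthwhile elaboration rather than a different method.
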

\begin{proof}
   Only put $S=I$ in the Theorem \ref{t:adjoint of pair frames(Bessel)}.
\end{proof}
Considering the above arguments, we can speak about \textbf{adjoint} or \textbf{conjugate of pair frames} for the dual (conjugate) of a Banach spaces.

\begin{prop}\label{p:operatore and pair frames}
Let $\ell$ be a BK-space, $F=\{f_i\}\subset \mathrm{X}$ and $G=\{g_i\}\subset \mathrm{X^*}$. Suppose that $T:\ell \rightarrow \mathrm{X}$ is a bounded  operator and $V,W$ are bounded operators on  $\X$.
\begin{enumerate}
\item If $(G,F)$ ( $( G,  T)$ ) is an (unconditional) pair Bessel (Banach pair Bessel w.r.t. $\ell$), then $(\{W^* g_i\},\{V f_i\})$ ( $( \{ W^*g_i\} , V T)$ ) is an (unconditional) pair Bessel (Banach pair Bessel w.r.t. $\ell$).
\item If $V,W$ are invertible and $(G,F)$ ( $( G ,  T)$ ) is an (unconditional) pair frame (Banach pair frame w.r.t. $\ell$), then $(\{W^* g_i\},\{V f_i\})$ ( $( \{ W^*g_i\} , V T)$ ) is an (unconditional) pair frame (Banach pair frame w.r.t. $\ell$).
\end{enumerate}

 \end{prop}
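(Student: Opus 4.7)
The plan is to observe that in all four scenarios of the proposition, the new associated operator factors through the old one as $\widetilde{S} = V \circ S \circ W$, where $S$ is either $S_{FG}$ or $S_{TG}$. Once this factorization is in place, every claim reduces to either a composition of bounded operators being bounded or a composition of invertibles being invertible. The key algebraic step is the adjoint identity $\langle f, W^* g_i\rangle = \langle Wf, g_i\rangle$, which rewrites $\sum \langle f, W^* g_i\rangle V f_i$ as $\sum \langle Wf, g_i\rangle V f_i$ and $VT(\{\langle f, W^* g_i\rangle\})$ as $VT(\{\langle Wf, g_i\rangle\})$. Since $V$ is bounded, continuity lets me pull it out of the limit of partial sums, so the new series converges whenever the old one does, with sum $V S_{FG}(Wf) = (V S_{FG} W)(f)$ in the pair Bessel case and $V S_{TG}(Wf) = (V S_{TG} W)(f)$ in the Banach pair Bessel case.

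Given the factorization, part (1) for the pair Bessel is immediate, since $V S_{FG} W$ (resp.\ $V S_{TG} W$) is bounded as a composition of bounded operators. In the Banach pair Bessel case I must additionally verify that $\{W^* g_i\}$ is an $\ell$-Bessel and that $VT : \ell \to \mathrm{X}$ is bounded: the latter is clear, and the former follows from the estimate $\|\{\langle f, W^* g_i\rangle\}\| = \|\{\langle Wf, g_i\rangle\}\| \le B\,\|W\|\,\|f\|$, where $B$ is the $\ell$-Bessel bound of $G$. For part (2), invertibility of $V$, $W$, and $S$ makes $\widetilde{S} = V S W$ invertible with inverse $W^{-1} S^{-1} V^{-1}$, so the pair frame conclusions follow at once.

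The step that needs a touch more care is the unconditional case, and this is the closest thing to an obstacle. For the ordinary unconditional pair Bessel, if $\sum \langle Wf, g_{\sigma(i)}\rangle f_{\sigma(i)}$ converges to $S_{FG}(Wf)$ for every permutation $\sigma$, then by continuity of $V$ the series $\sum \langle f, W^* g_{\sigma(i)}\rangle V f_{\sigma(i)}$ converges to $V S_{FG}(Wf) = \widetilde{S}(f)$, establishing unconditional convergence uniformly in $\sigma$. For the unconditional Banach pair Bessel w.r.t.\ $\ell$, the same inequality shows that $\{W^* g_i\}$ is an unconditional $\ell$-Bessel, and given the operators $T_\sigma : \ell \to \mathrm{X}$ from the definition applied to $(G,T)$, the bounded operators $(VT)_\sigma := V T_\sigma$ satisfy
\[
(VT)_\sigma\bigl(\{\langle f, W^* g_{\sigma(i)}\rangle\}\bigr) = V T_\sigma\bigl(\{\langle Wf, g_{\sigma(i)}\rangle\}\bigr) = V S_{TG}(Wf) = \widetilde{S}(f),
\]
which is precisely what Definition \ref{d:Banach pair frame  w.r.t. l} requires. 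Invertibility then carries over to the frame statements exactly as before, so (1) and (2) are covered in all four variants.
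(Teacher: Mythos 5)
Your proof is correct and follows essentially the same route as the paper: both rest on the factorization of the new operator as $VSW$ via the adjoint identity $\langle f, W^*g_{\sigma(i)}\rangle = \langle Wf, g_{\sigma(i)}\rangle$, with continuity of $V$ handling convergence and invertibility of $V$, $S$, $W$ giving part (2). Your write-up is in fact slightly more careful than the paper's, since you also verify explicitly that $\{W^*g_i\}$ remains an $\ell$-Bessel and that $VT_\sigma$ serves as the required permutation of $VT$, steps the paper leaves implicit.
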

\begin{proof}
We prove the assertion in the unconditional setting. Let $f \in \mathrm{X}$ and $\sigma$ be a permutation of $\mathbb{I}$. In the pair Bessel case, we have
\[ VS W (f)= VS_{\sigma }W (f)=\sum \langle W f , g_{\sigma  (i)} \rangle V f_{\sigma  (i)}= \sum \langle  f , W^* g_{\sigma  (i)} \rangle V f_{\sigma  (i)}  .\]
In the Banach pair Bessel case, let  $T_{\sigma } $ be a permutation of $T$. Then
\[ VSW (f)=  VS_{\sigma }W (f)=V T_{\sigma } (\{ \langle W f , g_{\sigma  (i)} \rangle \})= V T_{\sigma } (\{ \langle f ,  W^* g_{\sigma  (i)} \rangle \}). \]
  The above relations prove assertion (1). The  assertion (2) is a result of invertibility of $VSW $, when $V$ and $W$ are invertible.
\end{proof}

\bigskip

\end{document}